\newtheorem{thm}{Theorem}[section]
\newtheorem{lem}{Lemma}
\newtheorem{cor}{Corollary}
\theoremstyle{definition}
\numberwithin{equation}{section}
\begin{document}

%
%
%
%
%
%
%
%
%

\title[$p$-adic quotient sets: Cubic forms]
 {$p$-adic quotient sets: Cubic forms}

\author{Deepa Antony}
\address{Department of Mathematics, Indian Institute of Technology Guwahati, Assam, India, PIN- 781039}
\email{deepa172123009@iitg.ac.in}

\author{Rupam Barman}
\address{Department of Mathematics, Indian Institute of Technology Guwahati, Assam, India, PIN- 781039}
\email{rupam@iitg.ac.in}

\date{October 23, 2021}

\subjclass{Primary 11B05, 11E76, 11E95}

\keywords{$p$-adic number; Quotient set; Ratio set; Cubic form}

\dedicatory{}

\begin{abstract} 
For $A\subseteq \{1, 2, \ldots\}$, we consider $R(A)=\{a/b: a, b\in A\}$. It is an open problem to study the denseness of $R(A)$ in the $p$-adic numbers when $A$ is the set of nonzero values assumed by a cubic form. We study this problem for the cubic forms $ax^3+by^3$, 
where $a$ and $b$ are integers. We also prove that if $A$ is the set of nonzero values assumed by a non-degenerate, integral and primitive cubic form with more than 9 variables, then $R(A)$ is dense in $\mathbb{Q}_p$.
\end{abstract}

\maketitle
\section{Introduction and statement of results} 
Let $A$ be a subset of $\mathbb{N}=\{1, 2, 3, \ldots\}$. The set $R(A)=\{a/b:a,b\in A\}$ is called the ratio set or quotient set of $A$. 
Many authors have studied the denseness of ratio sets of different subsets of $\mathbb{N}$ in the positive real numbers. See for example \cite{real-1, real-2, real-3, real-4, real-5, real-6, real-7, real-8, real-9, real-10, real-11, real-12, real-13, real-14}.  
An analogous study has also been done for algebraic number fields, see for example \cite{algebraic-1, algebraic-2}. 
\par For a prime $p$, let $\mathbb{Q}_p$  denote the field of $p$-adic numbers. The study of denseness of ratio sets in $\mathbb{Q}_p$ is equally interesting. Such a study was initiated by Garcia and Luca \cite{garcia-luca} when they showed that the set of Fibonacci numbers is dense in $\mathbb{Q}_p$ for all primes $p$. Then Sanna \cite{Sanna} showed that the $k$-generalized Fibonacci numbers are dense in $\mathbb{Q}_p$. Later Garcia, Hong, Luca, Pinsker, Sanna, Schechter and Starr \cite{garciaetal} further broadened such investigation by considering different kinds of sets, for example, sum of squares and cubes of natural numbers. One of their results states that if $A=\{x^2+y^2: x, y\in \mathbb{Z}\}\setminus \{0\}$, then $R(A)$ is dense in $\mathbb{Q}_p$ if and only if $p\equiv 1\pmod 4$. They further posed some questions about denseness of ratio sets associated with quadratic forms and cubic forms.  They also asked a question on denseness of ratio sets associated with partitions of $\mathbb{N}$. For example, they asked whether there is a partition of $\mathbb{N}$ into two sets $A$ and $B$ such that $R(A)$ and $R(B)$ are dense in no $\mathbb{Q}_p$. Miska and Sanna \cite{miska-sanna} showed that the answer to this question is negative. In fact they proved more general result related to this question. In \cite{piotr}, Miska, Murru and C. Sanna studied the denseness of $R(A)$ in $\mathbb{Q}_p$ when $A$ is the image of $\mathbb{N}$ under a polynomial $f\in\mathbb{Z}[X]$.
The denseness of ratio set of nonzero values assumed by a quadratic form has been completely answered by Donnay, Garcia and Rouse \cite{Donnay}. They showed that for a binary quadratic form $Q$, $R(A)$ is dense in $\mathbb{Q}_p$ if and only if the discriminant of $Q$ is a nonzero square in $\mathbb{Q}_p$. They also proved that for a quadratic form in at least three variables, $R(A)$ is always dense in $\mathbb{Q}_p$. Later Miska \cite{miska} gave shorter proof for the same. 
\par 
The aim of this paper is to study this problem for the cubic forms.
A cubic form is a homogeneous polynomial
\begin{align*}
C(x_1,x_2, \ldots, x_r)=\sum_{i=1}^{r}\sum_{j=1}^{r}\sum_{k=1}^{r}a_{ijk}x_i x_j x_k 
\end{align*} 
of degree 3. We say that $C$ is integral if $a_{ijk} \in \mathbb{Z}$ for all $i, j, k$ and $C$ is primitive if there is no positive integer $d>1$ such that $d|a_{ijk}$ for all $i, j, k$. 
A form $C$ is said to be isotropic over a field $\mathbb{F}$ if there is a nonzero  vector $\overline{x}\in \mathbb{F}^r$ such that $C(\overline{x}) =0$. Otherwise  $C$ is said to be anisotropic over $\mathbb{F}$. The ratio set generated by a cubic form $C$ is 
\begin{align*}
R(C)=\{C(\overline{x})/C(\overline{y}): \overline{x}, \overline{y}\in \mathbb{Z}^r, C(\overline{y})\neq 0\}.
\end{align*}
In this article, we study denseness of $R(C)$ when $C$ is the cubic form $C(x,y)=ax^3+by^3$, where $a$ and $b$ are non-zero integers. To be specific, we prove the following two main results.
\begin{thm}\label{thm3}
	Let $C(x,y)=ax^3+by^3$ be primitive and integral. If $C$ is anisotropic modulo $p$, then $R(C)$ is not dense in $\mathbb{Q}_p$.
\end{thm}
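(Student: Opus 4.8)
The plan is to show that every nonzero value of $C$ has $p$-adic valuation divisible by $3$; this confines $R(C)$ to a proper closed subset of $\mathbb{Q}_p$ and so rules out denseness. As a preliminary observation, anisotropy of $C$ modulo $p$ already forces $p\nmid a$ and $p\nmid b$: the vectors $(1,0)$ and $(0,1)$ are nonzero in $\mathbb{F}_p^2$ and $C(1,0)=a$, $C(0,1)=b$, so if $p$ divided either coefficient we would obtain a nonzero zero of $C$ modulo $p$. Thus only the weaker fact $p\nmid ab$ is actually needed in what follows. (One could also note that anisotropy forces $p\equiv 1\pmod 3$, since for $p\not\equiv 1\pmod 3$ every residue is a cube, but this is not required for the argument.)

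For the main step, fix $(x,y)\in\mathbb{Z}^2$ with $N:=C(x,y)\neq 0$ and set $\alpha=v_p(x)$, $\beta=v_p(y)$, with the convention $v_p(0)=\infty$. Since $p\nmid ab$ we have $v_p(ax^3)=3\alpha$ and $v_p(by^3)=3\beta$. If $\alpha\neq\beta$, the ultrametric inequality gives $v_p(N)=\min(3\alpha,3\beta)=3\min(\alpha,\beta)\equiv 0\pmod 3$. If $\alpha=\beta=:\gamma$, then $\gamma$ is finite because $(x,y)\neq(0,0)$; writing $x=p^\gamma u$ and $y=p^\gamma w$ with $p\nmid u$ and $p\nmid w$, we get $N=p^{3\gamma}(au^3+bw^3)$, and since $(u,w)$ reduces to a nonzero vector of $\mathbb{F}_p^2$, anisotropy of $C$ modulo $p$ yields $au^3+bw^3\not\equiv 0\pmod p$, hence $v_p(N)=3\gamma\equiv 0\pmod 3$. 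In every case $v_p(C(x,y))\equiv 0\pmod 3$.

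Finally, for any $r=C(\overline x)/C(\overline y)\in R(C)$ with $r\neq 0$ we have $v_p(r)=v_p(C(\overline x))-v_p(C(\overline y))\equiv 0\pmod 3$. Consequently the nonempty open ball $p+p^2\mathbb{Z}_p$, all of whose elements have valuation exactly $1$, is disjoint from $R(C)$, and therefore $R(C)$ is not dense in $\mathbb{Q}_p$. The only genuinely substantive point is the equal-valuation case, where anisotropy modulo $p$ is precisely what pins down $v_p(N)$; the remainder is routine valuation bookkeeping.
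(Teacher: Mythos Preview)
Your proof is correct and follows essentially the same approach as the paper: both show that every nonzero value $C(x,y)$ has $p$-adic valuation divisible by $3$, by factoring out the minimal power of $p$ from $(x,y)$ and then invoking anisotropy modulo $p$ to see that no further power of $p$ divides the remaining factor. The only cosmetic differences are that you split into the cases $v_p(x)\neq v_p(y)$ versus $v_p(x)=v_p(y)$ (whereas the paper splits on whether $p\mid C(x,y)$), and you conclude by exhibiting the explicit open ball $p+p^2\mathbb{Z}_p$ disjoint from $R(C)$, while the paper appeals to its Lemma~\ref{lem1}.
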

For a prime $p$, let $\nu_p$ denote the $p$-adic valuation function.
\begin{thm}\label{thm4}
	Let $C(x,y)=ax^3+by^3$ be primitive and integral.
	\begin{enumerate}
		\item 	If $p\nmid ab$, then $R(C)$ is dense in $\mathbb{Q}_p$ if and only if $ba^{-1}$ is a cubic residue modulo $p^\alpha$, where $\alpha=1+\nu_p(3)$.
		\item If  $a=p^k\ell$ such that $p\nmid \ell$ and  $3|k$, then $R(C)$ is dense in $\mathbb{Q}_p$ if and only if $b^{-1}\ell$ is a cubic residue modulo $p^\alpha$, where $\alpha=1+\nu_p(3)$.
		\item If $a=p^k\ell,$ $p\nmid \ell$ and $3\nmid k$, then $R(C)$ is not dense in $\mathbb{Q}_p$. 
	\end{enumerate}
\end{thm}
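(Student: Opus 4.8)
The plan is to translate the problem into a statement about the finite group $G=\mathbb{Q}_p^{\times}/(\mathbb{Q}_p^{\times})^{3}$. Since $C$ is homogeneous of degree $3$, $C(r\overline{x})=r^{3}C(\overline{x})$, so $R(C)\cdot(\mathbb{Q}^{\times})^{3}=R(C)$; as $(\mathbb{Q}^{\times})^{3}$ is dense in the open finite-index subgroup $(\mathbb{Q}_p^{\times})^{3}$, and $1\in R(C)$ while $0=\lim_{n}p^{3n}\in\overline{R(C)}$, one gets $\overline{R(C)}=R(C)\cdot(\mathbb{Q}_p^{\times})^{3}\cup\{0\}$, a finite union of cosets together with $0$. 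Hence $R(C)$ is dense in $\mathbb{Q}_p$ if and only if $R(C)$ meets every coset of $(\mathbb{Q}_p^{\times})^{3}$; writing $S\subseteq G$ for the image of the set of nonzero values of $C$ and noting that $C(\overline{x})/C(\overline{y})$ lies in the coset $\overline{C(\overline{x})}-\overline{C(\overline{y})}$, density is equivalent to $S-S=G$ (additive notation in the abelian group $G$). Finally, $\alpha=1+\nu_p(3)$ is exactly the Hensel exponent for $t^{3}-u$, so for a $p$-adic unit $u$ the condition ``$u$ is a cube modulo $p^{\alpha}$'' means $u\in(\mathbb{Z}_p^{\times})^{3}$; thus the hypotheses of (1) and (2) read $ba^{-1}\in(\mathbb{Z}_p^{\times})^{3}$ and $b^{-1}\ell\in(\mathbb{Z}_p^{\times})^{3}$ respectively, and one checks $-1\in(\mathbb{Z}_p^{\times})^{3}$ for every prime $p$.

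The substance is the ``if'' direction of (1). Assume $p\nmid ab$ and $ba^{-1}\in(\mathbb{Z}_p^{\times})^{3}$. Taking $y=0$ and $x=0$ puts the classes of $a$ and $b$ into $S$, and by hypothesis these agree; call this class $\overline a$, lying in the ``$3\mid\nu_p$'' layer of $G$. The key point is that $ax^{3}+by^{3}$ represents, with full control, numbers of $p$-adic valuation exactly $1$: since $-ba^{-1}\in(\mathbb{Z}_p^{\times})^{3}$ there is $t\in\mathbb{Z}$ with $p\nmid t$, $p\mid at^{3}+b$, and perturbing $t$ by multiples of $p$ forces $\nu_p(at^{3}+b)=1$ with $(at^{3}+b)/p$ congruent to any prescribed residue mod $p$. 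Hence $S$ contains $(1,\overline{w})$ for every unit class $\overline{w}$, and already the differences of $\{(1,\overline{w})\}\cup\{\overline a\}\subseteq S$ exhaust $G$ (the $(1,\overline{w})-(1,\overline{w'})$ fill the valuation-$0$ layer, and $(1,\overline{w})-\overline a$, $\overline a-(1,\overline{w})$ the other two layers), so $S-S=G$. For (2) with $3\mid k$, the class of $p^{k}\ell x^{3}$ equals that of $\ell x^{3}$, so running the same argument in the range $\nu_p(y)=\nu_p(x)+k/3$ (where the two summands have equal valuation, and $b^{-1}\ell\in(\mathbb{Z}_p^{\times})^{3}$ supplies the needed nontrivial zero of $\ell x_{1}^{3}+by_{1}^{3}$ mod $p$) reduces the ``if'' part of (2) to (1).

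For the non-density statements: the converse in (1) follows since, if $ba^{-1}\notin(\mathbb{Z}_p^{\times})^{3}$, then ($-1$ being a cube) $ax^{3}+by^{3}\equiv0\pmod p$ has only the trivial zero, so $C$ is anisotropic modulo $p$ and Theorem~\ref{thm3} applies (for $p\equiv2\pmod3$ the hypothesis is vacuous, so nothing is to be proved). For the converse in (2): if $b^{-1}\ell\notin(\mathbb{Z}_p^{\times})^{3}$, primitivity forces $p\nmid b$, so both $p^{k}\ell x^{3}$ and $by^{3}$ have valuation $\equiv0\pmod3$; when these differ the minimum is $\equiv0\pmod3$, and when they coincide the value equals $p^{v}(\ell x_{1}^{3}+by_{1}^{3})$ with $x_{1},y_{1}$ units and $\ell x_{1}^{3}+by_{1}^{3}\not\equiv0\pmod p$ (same anisotropy remark), so still $\equiv0\pmod3$; hence $R(C)\subseteq\{z:3\mid\nu_p(z)\}\cup\{0\}$, not dense. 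For (3), with $3\nmid k$ and $p\nmid\ell b$, the two summands have valuations $\equiv k$ and $\equiv0\pmod3$, which are never equal, so the dominant summand pins down the class of every value to $(k\bmod3,\overline\ell)$ or $(0,\overline b)$; then $|S|\le2$ and $|S-S|\le3$, a proper subset of $G$ once $|G|=9$, i.e.\ for $p\equiv1\pmod3$, so $R(C)$ is not dense.

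I expect the main obstacle to be the ``if'' directions of (1) and (2) at $p=3$: there $\nu_p(3)=1$, the first-order term in the perturbation vanishes modulo $p$, and one must instead argue modulo $p^{2}$ and $p^{3}$ (the three cube-classes of units being separated only modulo $9$) to realise valuation-$1$ values in every unit class; similarly the converse of (1) at $p=3$ cannot invoke Theorem~\ref{thm3} verbatim, since the form is then isotropic modulo $3$, and needs an ``anisotropic mod $9$'' refinement or a direct argument. The exceptional primes $p=2$ and $p=3$, and more generally the behaviour when $p\equiv2\pmod3$, require separate handling throughout; away from these the remainder is routine bookkeeping with cosets of $(\mathbb{Q}_p^{\times})^{3}$.
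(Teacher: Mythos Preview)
Your approach is genuinely different from the paper's. The paper handles the ``if'' direction of (1) by using Hensel's lemma to produce a simple zero of $f(x)=x^{3}+ba^{-1}y_{0}^{3}$ in $\mathbb{Z}_p$ and then invoking the black-box result (Theorem~\ref{thm2}) that a simple $p$-adic zero forces density of the ratio set; for (2) it reduces to (1) via the substitution $x\mapsto p^{k/3}x$; for (3) it argues by contradiction, approximating a fixed cubic non-residue $n$ and deriving a valuation obstruction. Your reduction to the finite group $G=\mathbb{Q}_p^{\times}/(\mathbb{Q}_p^{\times})^{3}$ and the criterion ``density $\Leftrightarrow S-S=G$'' is correct and more structural: it makes the role of cube classes transparent and treats all three parts by the same bookkeeping. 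For $p\equiv1\pmod3$ your arguments are complete and correct (in particular, your claim in (3) that the dominant summand pins down the class in $G$ uses $1+p\mathbb{Z}_p\subseteq(\mathbb{Z}_p^{\times})^{3}$, valid for $p\neq3$).

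The cases you flag as needing separate handling are precisely where the \emph{statement itself} fails, not merely where your method needs patching. For $p\equiv2\pmod3$ every $p$-adic unit is a cube, so $|G|=3$; in part (3) your own computation gives $S=\{0,\,k\bmod3\}$ with $k\not\equiv0$, whence $S-S=G$ and $R(C)$ \emph{is} dense---e.g.\ $R(2x^{3}+y^{3})$ is dense in $\mathbb{Q}_2$, since $C(1,0)/C(0,1)=2$ already hits a nontrivial valuation class. The paper's proof of (3) begins ``choose a cubic non-residue $n\pmod p$'', which does not exist for such $p$. For $p=3$ the ``only if'' of (1) also fails: with $a=1$, $b=2$ one computes $C(1,1)=3$, $C(4,1)=66=3\cdot22$, $C(1,4)=129=3\cdot43$, so $C/3\equiv1,4,7\pmod9$ and $S$ contains the entire valuation-$1$ layer of $G$; together with $C(1,0)=1$ this gives $S-S=G$, hence $R(x^{3}+2y^{3})$ is dense in $\mathbb{Q}_3$ although $2$ is not a cube mod $9$. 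The paper's argument for this direction is a one-line assertion. So the ``separate handling'' you anticipate cannot be carried out as the theorem is stated: these are counterexamples, not loose ends.
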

Using Theorem \ref{thm4}, we have the following corollary.
\begin{cor}\label{cor-1}
	Let $C(x_1,x_2, \ldots, x_r)=a_1x_1^3+a_2x_2^3+\cdots +a_rx_r^3$ be primitive and integral. Suppose that $p\nmid a_i$ and $p^k|a_j$ for some $i, j$ from $1$ to $r, i\neq j$ and $3|k$. 
	Then $R(C)$ is dense in $\mathbb{Q}_p$ if $a_i^{-1}\ell$ is a cubic residue modulo $p^\alpha$, where $\alpha=1+\nu_p(3)$ and $a_j=p^k\ell,$ $p\nmid{\ell}$.
\end{cor}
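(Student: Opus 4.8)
The plan is to exhibit the binary diagonal cubic form obtained from $C$ by killing all but the $i$-th and $j$-th variables, to observe that its ratio set sits inside $R(C)$, and then to read off the conclusion from Theorem \ref{thm4}. Concretely, set $C'(u,v)=a_iu^3+a_jv^3$. Since $C'(u,v)=C(0,\dots,0,u,0,\dots,0,v,0,\dots,0)$ with $u$ in the $i$-th slot and $v$ in the $j$-th slot, every value of $C'$ on $\mathbb{Z}^2$ is a value of $C$ on $\mathbb{Z}^r$; in particular $C'(1,0)=a_i\neq 0$, so $R(C')\subseteq R(C)$. Hence it suffices to prove that $R(C')$ is dense in $\mathbb{Q}_p$ under the stated hypothesis.

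First I would pass to a primitive form. The ratio set is unchanged when a form is multiplied by a nonzero constant, so we may divide $C'$ by its content $d=\gcd(a_i,a_j)$. Because $p\nmid a_i$, we have $p\nmid d$, hence $d\mid\ell$ and $a_j/d=p^k(\ell/d)$ with $p\nmid \ell/d$. Thus
$$\widetilde C(u,v):=p^k(\ell/d)\,v^3+(a_i/d)\,u^3$$
is primitive and integral, and $R(\widetilde C)=R(C')$.

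Next I would apply Theorem \ref{thm4}(2) to $\widetilde C$, with the coefficients $a=p^k(\ell/d)$ and $b=a_i/d$ (if $k\geq 1$; if $k=0$ one invokes Theorem \ref{thm4}(1) instead and the remaining argument is identical). This gives that $R(\widetilde C)$ is dense in $\mathbb{Q}_p$ if and only if $(a_i/d)^{-1}(\ell/d)$ is a cubic residue modulo $p^\alpha$, $\alpha=1+\nu_p(3)$. A one-line computation modulo $p^\alpha$ shows $(a_i/d)^{-1}(\ell/d)\equiv a_i^{-1}\ell$, so this condition is precisely that $a_i^{-1}\ell$ be a cubic residue modulo $p^\alpha$. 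Under that hypothesis $R(\widetilde C)=R(C')$ is dense, and therefore $R(C)\supseteq R(C')$ is dense, which is the claim.

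There is no genuinely hard step here: the only points needing care are the legitimacy of the content reduction (which rests on $R(\lambda C')=R(C')$ for $\lambda\neq 0$) and the bookkeeping that $d\mid\ell$ and that reduction modulo $p^\alpha$ is compatible with the substitutions $a_i\mapsto a_i/d$ and $\ell\mapsto\ell/d$. Note also that only the ``if'' direction is asserted, so the inclusion $R(C')\subseteq R(C)$—which will in general be strict—is all that is required.
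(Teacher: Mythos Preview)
Your approach is exactly the paper's: restrict to the binary form $a_ix_i^3+a_jx_j^3$, note its ratio set sits inside $R(C)$, and invoke Theorem~\ref{thm4}. The paper dispatches this in one line. Your extra step of dividing by the content $d=\gcd(a_i,a_j)$ before applying Theorem~\ref{thm4} is a legitimate refinement the paper silently skips: Theorem~\ref{thm4} is stated for primitive binary forms, and primitivity of the ambient $r$-variable form $C$ does not force $\gcd(a_i,a_j)=1$. Your bookkeeping that $p\nmid d$, that $d\mid\ell$, and that $(a_i/d)^{-1}(\ell/d)\equiv a_i^{-1}\ell \pmod{p^\alpha}$ is correct, so the cubic-residue condition is preserved and the argument goes through cleanly.
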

\par 
Studying denseness of $R(C)$ when $C$ is any cubic form seems to be a difficult problem. We prove a general result on denseness of $R(C)$ when $C$ is a cubic form with more than 9 variables. 
Before we state our result, we recall some definitions. Two cubic forms $C_1$ and $C_2$ over $\mathbb{Q}_p$ are said to be equivalent if there is a non-singular linear transformation $T$ over $\mathbb{Q}_p$ such that $C_1(\overline{x})= C_2(T\overline{x})$. 
The order $\text{o}(C)$ of a cubic form $C$ is the smallest integer $m$ such that $C$ is equivalent to a form that contains only $m$ variables explicitly. A cubic form in $n$ variables is called non-degenerate if $\text{o}(C)=n$. 
In the following theorem we prove the denseness in $\mathbb{Q}_p$ of the ratio set of the values of a non-degenerate cubic form in more that 9 variables. 
\begin{thm}\label{thm7}
	Let $C$ be a non-degenerate, integral and primitive cubic form with more that 9 variables. Then $R(C)$ is dense in $\mathbb{Q}_p$.
\end{thm}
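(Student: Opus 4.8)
The plan is to reduce the statement to the existence of a \emph{nonsingular} $p$-adic zero of $C$, and then to exploit the homogeneity of $C$ together with the density of $\mathbb{Z}^n$ in $\mathbb{Z}_p^n$. Since $C$ is non-degenerate it genuinely involves $n\ge 10$ variables, so the classical theorem of Lewis and Demyanov (every cubic form over $\mathbb{Q}_p$ in at least $10$ variables has a nontrivial zero) shows that $C$ is isotropic over $\mathbb{Q}_p$. I expect the main obstacle to be upgrading this to a nonsingular zero, i.e.\ a vector $\overline v\in\mathbb{Q}_p^n\setminus\{\overline 0\}$ with $C(\overline v)=0$ and $\nabla C(\overline v)\neq\overline 0$; this is exactly the point where non-degeneracy is used.

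To carry out that step, suppose for contradiction that $C$ has no nonsingular $p$-adic zero, so every $\mathbb{Q}_p$-point of the hypersurface $\{C=0\}$ is a singular point; in particular the zero $\overline v$ found above satisfies $\nabla C(\overline v)=\overline 0$. For $\overline u\in\mathbb{Q}_p^n$ with $C(\overline u)\neq 0$ one then has
\[
C(\overline v+t\overline u)=C(\overline u)\,t^{3}+B(\overline u)\,t^{2},
\]
where $B$ is a fixed quadratic form: the $t$-coefficient vanishes because $\nabla C(\overline v)=\overline 0$, and the constant term because $C(\overline v)=0$. Hence the residual intersection of the line through $\overline v$ and $\overline u$ with $\{C=0\}$, namely $\overline v-\bigl(B(\overline u)/C(\overline u)\bigr)\overline u$, is again a $\mathbb{Q}_p$-point of $\{C=0\}$ and so, by assumption, singular; but on that line it is a \emph{simple} zero of $C$ unless $B(\overline u)=0$, whereas a singular point must be a zero of multiplicity at least $2$ of the restriction of $C$ to any line through it. Forcing the contradiction gives $B(\overline u)=0$ for all such $\overline u$; since such $\overline u$ are Zariski dense, $B\equiv 0$, and therefore $C(\overline v+\overline w)=C(\overline w)$ for every $\overline w$. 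Thus $C$ is invariant under translation by $\overline v\neq\overline 0$; completing $\overline v$ to a basis and using that a polynomial periodic in one coordinate is independent of it, $C$ is equivalent over $\mathbb{Q}_p$ to a form in at most $n-1$ variables, contradicting non-degeneracy.

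Now fix a nonsingular zero $\overline v$; after scaling we may assume $\overline v\in\mathbb{Z}_p^n$ and $\partial_iC(\overline v)\neq 0$ for some $i$. Applying Hensel's lemma to the one-variable polynomial $s\mapsto C(v_1,\dots,v_i+s,\dots,v_n)$, whose value and derivative at $s=0$ are $0$ and $\partial_iC(\overline v)$, shows that $C(\mathbb{Z}_p^n)$ contains a full neighbourhood $p^{N}\mathbb{Z}_p$ of $0$ for some $N$. Since $C$ has integral coefficients, a congruence $\overline x\equiv\overline z\pmod{p^{M}}$ solvable in $\mathbb{Z}^n$ forces $C(\overline x)\equiv C(\overline z)\pmod{p^{M}}$, so $\{C(\overline x):\overline x\in\mathbb{Z}^n\}$ is dense in $C(\mathbb{Z}_p^n)$ and in particular its closure contains $p^{N}\mathbb{Z}_p$. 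Because $\{a/b:a,b\in p^{N}\mathbb{Z}_p\setminus\{0\}\}=\mathbb{Q}_p^{\times}$ and division is continuous, $R(C)$ is dense in $\mathbb{Q}_p$: given a target $t$, choose $c_{1}\in p^{N}\mathbb{Z}_p\setminus\{0\}$ of valuation so large that $c_{2}:=tc_{1}\in p^{N}\mathbb{Z}_p$ as well, approximate $c_1$ and $c_2$ closely $p$-adically by $C(\overline y)$ and $C(\overline x)$ with $\overline x,\overline y\in\mathbb{Z}^n$ (a fine enough approximation keeps $\nu_p(C(\overline y))=\nu_p(c_1)$, so $C(\overline y)\neq 0$), and observe that $C(\overline x)/C(\overline y)\to c_2/c_1=t$. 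Apart from the passage to a nonsingular zero, every step is routine; one should only check that the Hensel step and the quadratic-form computation cause no trouble at $p=2,3$, which they do not, since the derivatives of a cubic form are honest polynomials with no denominators.
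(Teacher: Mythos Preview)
Your proof is correct and follows the same overall strategy as the paper --- obtain a \emph{nonsingular} $p$-adic zero of $C$, then deduce density of $R(C)$ from it --- but the two implementations of these steps differ. For the first step the paper simply quotes Pleasants' theorem, which directly furnishes a nonsingular zero whenever $\mathrm{o}(C)\ge 10$; you instead invoke the weaker Lewis--Demyanov theorem (giving only a nontrivial zero) and then supply a self-contained argument, using non-degeneracy, to upgrade it to a nonsingular one. Your upgrading argument is essentially a hands-on proof of the special case of Pleasants' result that is needed here. For the second step the paper fixes the $p$-adic coordinates $x_j$ ($j\ne i$) of the nonsingular zero, views $f(x)=C(x_1,\dots,x,\dots,x_r)$ as an analytic function with a simple zero, and cites the Miska--Murru--Sanna theorem that $R(f(\mathbb{Z}^+))$ is then dense in $\mathbb{Q}_p$; you instead use Hensel's lemma directly to show $C(\mathbb{Z}_p^n)\supseteq p^N\mathbb{Z}_p$ and then pass to integer arguments via the density of $\mathbb{Z}^n$ in $\mathbb{Z}_p^n$. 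Your route is longer but more self-contained, and it makes explicit the passage from $p$-adic to rational integer inputs that the paper's one-variable reduction leaves implicit.
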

\section{Preliminaries}
For a prime number $p$, every nonzero rational number $r$ has a unique representation of the form $r= \pm p^k a/b$, where $k\in \mathbb{Z}, a, b \in \mathbb{N}$ and $\gcd(a,p)= \gcd(p,b)=\gcd(a,b)=1$. 
The $p$-adic valuation of such an $r$ is $\nu_p(r)=k$ and its $p$-adic absolute value is $\|r\|_p=p^{-k}$. By convention, $\nu_p(0)=\infty$ and $\|0\|_p=0$. The $p$-adic metric on $\mathbb{Q}$ is $d(x,y)=\|x-y\|_p$. 
The field $\mathbb{Q}_p$ of $p$-adic numbers is the completion of $\mathbb{Q}$ with respect to the $p$-adic metric. We denote by $\mathbb{Z}_p$ the ring of $p$-adic integers which is the set of values of $\mathbb{Q}_p$ with $p$-adic norm less than or equal to 1.
\par We next state a few results which will be used in the proof of our theorems.
\begin{lem}\label{lem1}\cite[Lemma 2.1]{garciaetal}
	If $S$ is dense in $\mathbb{Q}_p$, then for each finite value of the $p$-adic valuation, there is an element of $S$ with that valuation.
\end{lem}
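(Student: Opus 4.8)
The plan is to exploit the rigidity of the $p$-adic valuation, namely the equality case of the ultrametric inequality, which has no analogue over $\mathbb{R}$. Fix any integer $k$; this is an arbitrary finite value of $\nu_p$, and I want to produce an element $s\in S$ with $\nu_p(s)=k$. The natural target point is the $p$-adic number $p^k$, which satisfies $\nu_p(p^k)=k$, equivalently $\|p^k\|_p=p^{-k}$.

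First I would use the hypothesis that $S$ is dense in $\mathbb{Q}_p$ to select an element $s\in S$ lying in the open ball of radius $p^{-k}$ centred at $p^k$, i.e. with $\|s-p^k\|_p<p^{-k}$. Writing $s=p^k+(s-p^k)$, the key step is the strong triangle inequality: since the two summands have strictly unequal $p$-adic absolute values, $\|s-p^k\|_p<p^{-k}=\|p^k\|_p$, the absolute value of the sum equals the larger of the two, so $\|s\|_p=p^{-k}$ and hence $\nu_p(s)=k$. As $k$ was arbitrary, this yields an element of $S$ of each prescribed finite valuation.

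Equivalently, and perhaps more transparently, one can phrase the argument topologically: the set $\{x\in\mathbb{Q}_p:\nu_p(x)=k\}$ is exactly the difference of the two clopen balls $\{x:\|x\|_p\le p^{-k}\}$ and $\{x:\|x\|_p<p^{-k}\}$, hence is itself open, and it is nonempty because it contains $p^k$; density of $S$ then forces this open set to meet $S$. There is no genuine obstacle here — the only point to be careful about is that the argument really does use the ultrametric structure (the equality case of the triangle inequality), which is precisely why the corresponding statement fails in the archimedean setting and why, $p$-adically, density alone suffices to realize every finite valuation.
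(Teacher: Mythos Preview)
Your argument is correct: density of $S$ guarantees an element $s$ in the open ball of radius $p^{-k}$ about $p^k$, and the equality case of the ultrametric inequality then forces $\|s\|_p=p^{-k}$, i.e.\ $\nu_p(s)=k$. The topological rephrasing (the set $\{x:\nu_p(x)=k\}$ is a nonempty open subset of $\mathbb{Q}_p$) is equally valid.

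There is nothing in the paper to compare your proof with: the paper does not prove Lemma~\ref{lem1} at all but simply quotes it from \cite[Lemma 2.1]{garciaetal} as a preliminary fact. Your proof is essentially the standard one and is presumably close to what appears in that reference.
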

\begin{lem}\label{lem2}\cite[Lemma 2.3]{garciaetal}
	Let $A\subset\mathbb{N}$.
	\begin{enumerate}
		\item 	If $A$ is $p$-adically dense in $\mathbb{N}$, then $R(A)$ is dense in $\mathbb{Q}_p$.
		\item 	If $R(A)$ is $p$-adically dense in $\mathbb{N}$, then $R(A)$ is dense in $\mathbb{Q}_p$.
	\end{enumerate}
\end{lem}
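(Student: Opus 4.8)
The plan is to prove both parts through a common two-stage reduction: first show that the relevant set is dense in $\mathbb{Z}_p$, and then upgrade density in $\mathbb{Z}_p$ to density in all of $\mathbb{Q}_p$ by exploiting the fact that $R(A)$ is symmetric under inversion, since $a/b \in R(A)$ forces $b/a \in R(A)$. A preliminary observation underlies everything: $\mathbb{N}$ is itself $p$-adically dense in $\mathbb{Z}_p$, because for every $z \in \mathbb{Z}_p$ and every $N$ there is a positive integer $n$ with $n \equiv z \pmod{p^N}$. Consequently the hypothesis ``$p$-adically dense in $\mathbb{N}$'' in either part yields, on passing to $p$-adic closures, density in $\mathbb{Z}_p$: if $\mathbb{N} \subseteq \overline{S}$ then $\mathbb{Z}_p = \overline{\mathbb{N}} \subseteq \overline{S}$.

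For part (1), I would first deduce that $R(A)$ is dense in $\mathbb{Z}_p$ from the density of $A$. Given a target $x \in \mathbb{Z}_p$ and a tolerance $p^{-N}$, I would use the density of $A$ to select $b \in A$ with $\|b - 1\|_p \leq p^{-N}$ (so that $\|b\|_p = 1$, i.e. $b$ is a $p$-adic unit) and $a \in A$ with $\|a - x\|_p \leq p^{-N}$. Writing $\frac{a}{b} - x = \frac{(a - x) + x(1 - b)}{b}$ and using $\|b\|_p = 1$ together with $\|x\|_p \leq 1$, the ultrametric inequality gives $\|\frac{a}{b} - x\|_p \leq p^{-N}$. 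Letting $N \to \infty$ shows $x \in \overline{R(A)}$, so $R(A)$ is dense in $\mathbb{Z}_p$ (the case $x = 0$ is included, realized by taking $a$ of large valuation). For part (2) this first stage is immediate, since the hypothesis directly gives $R(A)$ dense in $\mathbb{N}$, hence in $\mathbb{Z}_p$.

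The second stage is common to both parts: upgrade density in $\mathbb{Z}_p$ to density in $\mathbb{Q}_p$. Every element of $R(A)$ is a nonzero rational, and $R(A)$ is closed under inversion. For a target $x \in \mathbb{Q}_p$ with $\nu_p(x) = -m < 0$, I would note $w := 1/x \in \mathbb{Z}_p$ with $\nu_p(w) = m$, choose $r \in R(A)$ approximating $w$ to within $p^{-N}$ with $N > m$, observe that this forces $\|r\|_p = \|w\|_p = p^{-m}$ (hence $r \neq 0$ and $\nu_p(r) = m$), and then pass to $1/r \in R(A)$. The identity $\frac{1}{r} - x = \frac{w - r}{rw}$ yields $\|\frac{1}{r} - x\|_p = \|w - r\|_p \cdot p^{2m} \leq p^{2m - N}$, which tends to $0$ as $N \to \infty$. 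Together with the first stage and the case $\nu_p(x) \geq 0$ already handled, this covers every $x \in \mathbb{Q}_p$.

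The step I expect to be the main obstacle---or at least the one requiring the most care---is this inversion stage: the passage $r \mapsto 1/r$ amplifies the $p$-adic error by the factor $p^{2m}$, so the required quality $N$ of the initial approximation must be chosen depending on the valuation $m$ of the target. One must also verify that the approximation is fine enough to pin down $\nu_p(r)$ exactly (via $N > m$), since otherwise $1/r$ need not lie near $x$; this is precisely where the ultrametric rigidity of the valuation is used.
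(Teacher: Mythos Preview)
The paper does not supply its own proof of this lemma: it is quoted verbatim as \cite[Lemma 2.3]{garciaetal} and used as a black box, so there is no argument in the paper to compare against. Your proposal therefore has to be judged on its own merits, and it is correct. The two-stage reduction---first density in $\mathbb{Z}_p$ via the unit trick $b\approx 1$, then the lift to $\mathbb{Q}_p$ via the inversion symmetry $r\mapsto 1/r$ of $R(A)$---is exactly the standard route, and your error bookkeeping is accurate: from $\|r-w\|_p\le p^{-N}$ with $N>m=\nu_p(w)$ the ultrametric forces $\nu_p(r)=m$, and then $\|1/r-1/w\|_p=\|w-r\|_p\cdot p^{2m}\le p^{2m-N}$. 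The only cosmetic point is that the parenthetical about $x=0$ is unnecessary, since your general estimate already covers it; otherwise the argument is complete.
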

\begin{thm}\label{thm2}\cite[Corollary 1.3]{piotr}
	Let $f\colon\mathbb{Z}_p\rightarrow \mathbb{Q}_p$ be an analytic function with a simple zero in $\mathbb{Z}_p$, then  $R(f(\mathbb{Z}^+))$ is dense in $\mathbb{Q}_p$.
\end{thm}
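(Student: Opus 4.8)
The plan is to exploit the local linear behaviour of $f$ near its simple zero, together with the fact that $\mathbb{Z}^+$ is dense in $\mathbb{Z}_p$. Let $\xi\in\mathbb{Z}_p$ be a simple zero, so that $f(\xi)=0$ and $f'(\xi)\neq 0$; set $c=\nu_p(f'(\xi))$. Since $f$ is analytic, we may expand it about $\xi$ as
\begin{equation*}
f(x)=f'(\xi)(x-\xi)\bigl(1+g(x)\bigr),
\end{equation*}
where $g$ is a power series in $(x-\xi)$ with $g(\xi)=0$ whose coefficients have valuation bounded below (this is where analyticity on $\mathbb{Z}_p$ enters). Consequently, if $\nu_p(x-\xi)$ is large then $\nu_p(g(x))$ is large, so that $\nu_p(f(x))=c+\nu_p(x-\xi)$ and $f(x)$ equals $f'(\xi)(x-\xi)$ up to a factor arbitrarily close to $1$. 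The first step is to make this precise: given any $N$ there is an $L$ such that $\nu_p(x-\xi)\ge L$ forces $\nu_p(g(x))\ge N$.

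Next I would reduce the density of $R(f(\mathbb{Z}^+))$ to an approximation statement for the ratios $(m-\xi)/(n-\xi)$. Fix a target $\gamma\in\mathbb{Q}_p$ with $\gamma\neq 0$ and a precision; write $\gamma=p^{d}w$ with $\nu_p(w)=0$. Choose $v$ large (in particular $v+d\ge 0$), put $a=p^{v}$ and $b=\gamma\,p^{v}=w\,p^{v+d}$, so that $a,b\in\mathbb{Z}_p$ have valuations $v$ and $v+d$ and $b/a=\gamma$. Because $\mathbb{Z}^+$ is dense in $\mathbb{Z}_p$ and $\xi\in\mathbb{Z}_p$, I can choose positive integers $m,n$ with $m\equiv \xi+b$ and $n\equiv\xi+a$ to arbitrarily high precision. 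For such $m,n$ the differences $m-\xi$ and $n-\xi$ approximate $b$ and $a$, whence $\nu_p(m-\xi)=v+d$, $\nu_p(n-\xi)=v$, and $(m-\xi)/(n-\xi)$ is as close to $b/a=\gamma$ as desired.

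It remains to transfer this to the actual values $f(m),f(n)$. By the expansion above,
\begin{equation*}
\frac{f(m)}{f(n)}=\frac{m-\xi}{n-\xi}\cdot\frac{1+g(m)}{1+g(n)},
\end{equation*}
and since $v$ was taken large, both $g(m)$ and $g(n)$ have large valuation by the first step, so the second factor lies within any prescribed $p$-adic distance of $1$. Hence $f(m)/f(n)$ approximates $\gamma$ to the desired precision. One checks that $f(n)\neq 0$, since its valuation is the finite number $c+\nu_p(n-\xi)$ provided $n\neq\xi$, which we may arrange as $n$ ranges over positive integers; targets of valuation $0$ and the accumulation point $0\in\mathbb{Q}_p$ are covered by the same construction. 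This proves density of $R(f(\mathbb{Z}^+))$ in $\mathbb{Q}_p$.

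I expect the main obstacle to be the simultaneous control of valuation and unit part: one must select the two positive integers $m,n$ so that the ratio $(m-\xi)/(n-\xi)$ realises both the correct valuation $d$ and a prescribed unit $w$, while keeping $\nu_p(m-\xi)$ and $\nu_p(n-\xi)$ large enough that the analytic correction $1+g$ is negligible. Prescribing $m-\xi\approx b$ and $n-\xi\approx a$ with $a=p^{v}$, $b=\gamma p^{v}$ and $v$ large resolves both demands at once, but verifying that the errors coming from approximating $\xi+a,\xi+b$ by integers and from the tail $g$ both stay below the target precision is precisely the estimate that requires the quantitative statement from the first paragraph.
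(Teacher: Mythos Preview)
The paper does not contain a proof of this statement: Theorem~\ref{thm2} is quoted in the Preliminaries section as \cite[Corollary~1.3]{piotr} and is used as a black box in the proofs of Theorems~\ref{thm4} and~\ref{thm7}. There is therefore no ``paper's own proof'' to compare your attempt against.

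That said, your argument is along the right lines and is essentially the standard one: near a simple zero $\xi$ the function factors as $f(x)=f'(\xi)(x-\xi)(1+g(x))$ with $g$ small when $x$ is $p$-adically close to $\xi$, so ratios $f(m)/f(n)$ reduce to ratios $(m-\xi)/(n-\xi)$ up to a unit close to $1$, and the density of $\mathbb{Z}^+$ in $\mathbb{Z}_p$ lets you realise any prescribed target $\gamma$. One point to tighten is the justification that an analytic function on $\mathbb{Z}_p$ can be re-expanded as a convergent power series in $(x-\xi)$ with coefficients of bounded-below valuation; this is true for functions given by a power series converging on all of $\mathbb{Z}_p$, but you should state the relevant estimate (e.g.\ that the Taylor coefficients $a_n$ at $\xi$ satisfy $\nu_p(a_n)\to\infty$) rather than leave it implicit. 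With that in hand your quantitative control of $g$ and the choice $a=p^v$, $b=\gamma p^v$ go through as you describe.
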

We will need the following result to prove Theorem \ref{thm7}.
\begin{thm}\label{thm6}\cite[Theorem 2]{pleasants}
	Every cubic form $C$ over $K$  with $\text{o}(C)\geq 10$ has  a non-singular zero over $K$ where $K$ is any  field complete with respect to a discrete non-archimedean valuation.
\end{thm}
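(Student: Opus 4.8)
The plan is to reduce, via Hensel lifting, to the problem of finding a non-singular zero over the residue field, and to overcome the residual obstruction by an inductive $\pi$-adic descent. Let $\mathcal{O}$ be the valuation ring of $K$, let $\pi$ be a uniformizer, and let $\kappa=\mathcal{O}/\pi\mathcal{O}$ be the residue field. After clearing denominators and dividing out a common power of $\pi$, I may assume $C$ has coefficients in $\mathcal{O}$ and is primitive over $\mathcal{O}$, so that its reduction $\overline{C}$ modulo $\pi$ is a nonzero cubic form over $\kappa$. The basic engine is the multivariable Hensel's lemma: if $\overline{C}$ has a zero $\overline{\mathbf{a}}$ at which some partial $\partial\overline{C}/\partial x_{i}$ is nonzero, then single-variable Newton iteration in the $i$-th coordinate (with the other coordinates fixed at arbitrary lifts) produces $\mathbf{a}\in\mathcal{O}^{n}$ with $C(\mathbf{a})=0$; since the $i$-th component of $\nabla C(\mathbf{a})$ is then a unit, $\mathbf{a}$ is a non-singular zero of $C$ over $K$. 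Hence everything reduces to producing a non-singular zero of $\overline{C}$ over $\kappa$, or to exploiting its absence.

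When $\kappa$ is finite and $\overline{C}$ depends essentially on at least four variables, a non-trivial zero of $\overline{C}$ exists by the Chevalley--Warning theorem, and one promotes it to a non-singular zero by intersecting $\{\overline{C}=0\}$ with a generic linear subspace and counting points to see that the smooth locus is non-empty. The delicate case, and the one the hypothesis $\mathrm{o}(C)\ge 10$ is designed to control, is when $\overline{C}$ admits only singular zeros over $\kappa$, so that every residual zero simultaneously kills all first partials and lifting is blocked. Here I would show, by an $\mathcal{O}$-unimodular change of variables, that $C$ can be arranged so that a distinguished block of its variables occurs only with coefficients in $\pi\mathcal{O}$; rescaling that block by $\pi^{-1}$ then yields a $K$-equivalent form $C'$ whose associated $\pi$-adic level invariant is strictly smaller.

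The descent is iterated. Since the order $\mathrm{o}(C)$ is invariant under $K$-equivalence it is preserved at every stage, whereas the level invariant strictly decreases and is bounded below, so the process terminates; it can terminate only by exhibiting a residue-field non-singular zero, which then lifts. The function of the threshold $10=3^{2}+1$ is to ensure the descent never degenerates into the sharp nine-variable obstruction: the form $N_{1}\oplus\pi N_{2}\oplus\pi^{2}N_{3}$, where each $N_{j}$ is an anisotropic cubic norm form in three variables, has order $9$ and no non-trivial $K$-zero, because on the three blocks the values of the summands lie in distinct residue classes modulo $3$ and the term of minimal valuation cannot cancel. One must prove that, up to equivalence, obstructed forms are accounted for by such norm-type pieces and therefore have order at most $9$.

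The crux, and the main obstacle, is this last structural assertion: verifying that the singular-zero obstruction genuinely forces a level drop while the order stays at least $10$, and that every terminal obstructed form has order at most $9$. Two features make it hard. When the residue characteristic is $3$ the Jacobian criterion for smoothness degenerates, so the dichotomy ``non-singular residual zero versus variables pushable into $\pi\mathcal{O}$'' must be refined by working modulo $\pi^{2}$ rather than $\pi$; this is the same phenomenon responsible for the exponent $\alpha=1+\nu_{p}(3)$ appearing in the paper's other theorems. Moreover, upgrading a non-trivial zero to a non-singular one requires bounding the dimension of the singular locus uniformly through the reduction. I would therefore organize the proof around a single well-founded numerical invariant and a finite case analysis of the possible obstructed shapes, which is precisely the combinatorial heart of Pleasants' argument.
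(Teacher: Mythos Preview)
The paper does not prove this statement; it is quoted verbatim from Pleasants' paper \cite{pleasants} and used as a black box in the proof of Theorem~\ref{thm7}. There is therefore no proof in the paper for your proposal to be compared against.

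As for the proposal itself: what you have written is a reasonable high-level outline of the Pleasants strategy (Hensel lifting reduces to a non-singular residual zero; failure of that forces a $\pi$-adic level drop; the order invariant $\mathrm{o}(C)\ge 10$ prevents termination at an obstructed norm-type form of order $\le 9$). However, it is a plan rather than a proof. The two structural claims you flag as ``the crux'' --- that a singular-only reduction forces a strict drop in a well-founded level invariant while preserving order, and that terminal obstructed forms have order at most $9$ --- are precisely the substance of Pleasants' argument, and you have not supplied them. In particular, the $\pi$-adic reduction process for cubics is considerably more delicate than for quadratics (there is no diagonalization available, and the interaction of the ``bad'' block with the rest of the form is not simply a direct sum), so the assertion that one can always push a block of variables into $\pi\mathcal{O}$ by a unimodular change needs a genuine argument about the structure of the singular locus of $\overline{C}$.
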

\section{Proof of the theorems}
\begin{proof}[Proof of Theorem \ref{thm3}]
	We claim that $\nu_p(C(x,y))$ is a multiple of 3 for all $x,y\in \mathbb{Z}$. If $C(x,y)\not\equiv 0\pmod{p}$, then $\nu_p(C(x,y))=0$. Suppose $C(x,y)\equiv0\pmod{p}$. 
	Then $(x,y)\equiv (0,0)\pmod{p}$ since $C$ is anisotropic; hence $x=mp^j$ and $y=np^k$ for some $j,k \geq 1$, $p\nmid m$ and $p\nmid n$. Without loss of generality, assume that $j\geq k$. Then,
	\begin{align*}
	\nu_p(C(x,y))
	&=\nu_p(am^3p^{3j}+bn^3p^{3k}) \\
	&=\nu_p(p^{3k}(am^3p^{3(j-k)}+bn^3))\\
	&=3k+\nu_p(C(mp^{(j-k)},n))=3k
	\end{align*}
	since $p\nmid n$ and $C$ is anisotropic. Thus by Lemma \ref{lem1}, $R(C)$ is not dense in  $\mathbb{Q}_p$.
\end{proof}
\begin{proof}[Proof of Theorem \ref{thm4}] We first prove part (1) of the theorem. Here $p\nmid ab$.
	Suppose that $p\neq 3$ and $ba^{-1}$ is a cubic residue modulo $p$. Then for a $y_0\in \mathbb{Z}$ with  $p \nmid y_0 $, there exists $x_0\in \mathbb{Z}$, 
	$p\nmid x_0$ such that $-ba^{-1}y_0^3\equiv x_0^3 \pmod{p}$. 
	Consider the polynomial $f(x)= x^3+ba^{-1} y_0^3$. Then $f(x_0)\equiv 0 \pmod {p}$ and $f^\prime(x_0)=3x_0^2 \not\equiv 0\pmod{p}$. Therefore, by Hensel's lemma \cite[(3.4)]{gouvea}, $f$ has a root $\alpha\in \mathbb{Z}_p$ which is a  simple root since $x_0 \not\equiv 0 \pmod{p}$. 
	By Theorem \ref{thm2}, $R(f(\mathbb{N}))$ is dense in $\mathbb{Q}_p$. Since $R(f(\mathbb{N}))\subset R(C)$, hence $R(C)$ is dense in $\mathbb{Q}_p$.
	Conversely, suppose that $ba^{-1}$ is not a cubic residue modulo $p$.
	If $C(x,y)\equiv 0 \pmod{p}$, that is, $ax^3+by^3\equiv 0 \pmod{p}$ then    
	\begin{align*}
	x^3\equiv -ba^{-1}y^3 \pmod {p}.
	\end{align*}
	The left side is either a cubic residue or divisible by $p$ and the right side is either a cubic non-residue or divisible by $p$. Hence $x\equiv y\equiv 0 \pmod{p}$. Therefore $C$ is anisotropic modulo $p$. 
	Hence $R(C)$ is not dense in $\mathbb{Q}_p$ by Theorem \ref{thm3}.
	\par 
	For $p=3$, consider $C'(x,y)=x^3+ba^{-1}y^3$. Suppose $ba^{-1}$ is a cubic residue modulo 9. Then by \cite[Lemma 3.5]{cassels}, $ba^{-1}$ is a cube in $\mathbb{Z}_3$, that is, there exists a non-zero $x_0\in \mathbb{Z}_3$ such that $x_0^3+ba^{-1}=0$. In particular, $x_0$ is a simple zero of the polynomial $f(x)=x^3+ba^{-1}$ in $\mathbb{Z}_3$. Therefore, by Theorem \ref{thm2}, $R(f(\mathbb{N}))$ is dense in $\mathbb{Q}_3$.  Since $R(f(\mathbb{N}))\subset R(C')$, $R(C)=R(C')$ is dense in $\mathbb{Q}_3$. Conversely, since $3\nmid ab$, $R(C)$ is dense in $\mathbb{Q}_3$ implies $ba^{-1}$ is a cube in $\mathbb{Z}_3$. Therefore, $ba^{-1}$ is a cubic residue modulo $3^k$ for each positive integer $k$, in particular, $ba^{-1}$ is a cubic residue modulo 9.    
	\par We next prove part (2) of the theorem. Here $a=p^k\ell$ such that $p\nmid \ell$ and $3|k$. We write $k=3k'$.
	Suppose that $b^{-1}\ell$ is not a cubic residue modulo $p^\alpha$. Let $C'(x,y)=b^{-1}\ell x^3+y^3$. By the first part of the theorem, $R(C')$ is not dense in $\mathbb{Q}_p$. We have $$b^{-1}C(x, y)=p^kb^{-1}\ell x^3+y^3=b^{-1}\ell(p^{k^\prime}x)^3+y^3=C'(p^{k^\prime}x, y).$$ 
	Since $R(C)=R(b^{-1}C)\subset R(C')$, $R(C)$ is not dense in $\mathbb{Q}_p$.
	\par 
	Conversely, suppose that $b^{-1}\ell$ is a cubic residue modulo $p^\alpha$. We have $b^{-1}C(x,y)=b^{-1}\ell p^{3k^\prime}x^3+y^3$. Using part (1) of the theorem, we have that $R(C')$ is dense in $\mathbb{Q}_p$, where  $C'(x,y)=b^{-1}\ell x^3+y^3$. Since 
	\begin{align*}
	\frac{C'(x,y)}{C'(z,w)}=\frac{p^{3k^\prime} C'(x,y)}{p^{3k^\prime}C'(z,w)}
	=\frac{C'(p^{k^\prime}x,p^{k^\prime}y)}{C'(p^{k^\prime}z,p^{k^\prime}w)}
	=\frac{C(x,p^{k^\prime}y)}{C(z,p^{k^\prime}w)},
	\end{align*}
	therefore, $R(C)$ is dense in $\mathbb{Q}_p$. This completes the proof of part (2) of the theorem.
	\par Finally, we prove part (3) of the theorem. Here $a=p^k\ell, p\nmid \ell$ and $3\nmid k$. Suppose that $R(C)$ is dense in $\mathbb{Q}_p$. Let $C'(x, y)=b^{-1}C(x, y)$. Then $R(C')$ is dense in $\mathbb{Q}_p$. 
	Choose  a cubic non-residue $n\pmod{p}$. There exist $x,y,z,w \in \mathbb{Z}$ not all multiples of $p$ such that
	\begin{align*}
	\left\Vert\frac{C'(x,y)}{C'(z,w)}-n\right\Vert_p < \frac{1}{p^k}.
	\end{align*}
	This yields
	\begin{align*} 
	&\left\Vert y^3-nw^3+p^kb^{-1}\ell (x^3-nz^3) \right\Vert_p \\
	&=\left\Vert C'(x,y)-nC'(z,w)\right\Vert_p\\
	&<\frac{\left\Vert C'(z,w)\right\Vert_p}{p^k}\\
	&\leq  \frac{1}{p^k}.
	\end{align*}
	If $p\nmid y$ or $p\nmid w$, then $y^3-nw^3 \not\equiv 0 \pmod{p}$ since $n$ is a cubic non-residue. Hence $\left\Vert C'(x,y)-nC'(z,w)\right\Vert_p=1$ which is a contradiction. 
	Therefore $p|y$ and $p|w$ which give $\nu_p(y^3-nw^3)=3m$ where $m$ is a positive integer. Since $p|y$ and $p|w$, we have either $p\nmid x$ or $p\nmid z$. This yields $\nu_p(x^3-nz^3)=0$. Since $3\nmid k$, therefore $C'(x, y)-nC'(z, w)$ is the sum of a $p$-adic integer with valuation a multiple of 3 and a $p$-adic integer with valuation not a multiple of 3.
	Hence $\left\Vert C'(x,y)-nC'(z,w)\right\Vert_p\geq p^{-k}$ which gives a contradiction. Thus, $R(C')$ is not dense in $\mathbb{Q}_p$. Hence $R(C)$ is not dense in $\mathbb{Q}_p$.
	This completes the proof of the theorem.
\end{proof}
\begin{proof}[Proof of Corollary \ref{cor-1}]
	It follows from Theorem \ref{thm4} that  $R(a_ix_i^3+a_jx_j^3)$ is dense in $\mathbb{Q}_p$. Since $R(a_ix_i^3+a_jx_j^3) \subset R(C)$, $R(C)$ is dense in $\mathbb{Q}_p$.
\end{proof}
We now prove Theorem \ref{thm7}. 
\begin{proof}[Proof of Theorem \ref{thm7}] Let $C$ be a non-degenerate, integral and primitive cubic form with more that 9 variables. Recall that a vector $\overline{x}\in \mathbb{Q}_p^r$ such that $C(\overline{x})=0$ is a 
	non-singular zero of $C$ if $\frac{\partial C}{\partial x_i}(\overline{x})\neq 0$ for some $i$ from 1 to $r$. In Theorem \ref{thm6}, taking $K= \mathbb{Q}_p$, we get a non-singular zero of $C$ over $\mathbb{Q}_p$. 
	By multiplying by an appropriate power of $p$, we get a non-singular zero $\overline{x_0}=(x_1,x_2, \ldots, x_r)$ of $C$ in $\mathbb{Z}_p^r$. We have
	\begin{align}
	C(\overline{x_0})=0, \frac{\partial C}{\partial x_i}(\overline{x_0})\neq 0 \label{eq1}
	\end{align}
	for some $i$ from 1 to $r$. Consider the polynomial $f(x)=C(x_1, \ldots,x, \ldots,x_r)$ in one variable $x$ obtained by replacing $x_i$ by $x$. By \eqref{eq1}, $f(x)$ has a simple root in $\mathbb{Z}_p$. 
	Therefore by Theorem \ref{thm2}, $R(f(\mathbb{N}))$ is dense in $\mathbb{Q}_p$. Since $R(f(\mathbb{N}))\subset R(C)$, $R(C)$ is dense in $\mathbb{Q}_p$. This completes the proof of the theorem.
\end{proof}
\section{Acknowledgements}
The authors are grateful to Piotr Miska for pointing out an error in an earlier draft of the article and for many helpful discussions. 

\end{document}